\DeclareMathAlphabet{\mathpzc}{OT1}{pzc}{m}{it}
\def\bb{\mathbb}
\def\cal{\mathcal}
\def\F2{\mathbb F_2}
\def\A2{{\mathcal A}_2}
\def\C{{\mathbb C}}
\def\SC{{\cal{SC}}}
\def\cl{\mathpzc c}
\def\op{\mathpzc o}
\newtheorem{thm}{Theorem}[subsection]
\newtheorem{lem}[thm]{Lemma}
\newtheorem{prop}[thm]{Proposition}
\newtheorem{cor}[thm]{Corollary}
\theoremstyle{definition}
\newtheorem{defn}[thm]{Definition}
\theoremstyle{remark}
\newtheorem{rem}[thm]{Remark}
\theoremstyle{remark}
\begin{document}

\title[Kontsevich Compactification and the Swiss-cheese Operad]{Explicit Homotopy Equivalences Between Some Operads}
\author{Eduardo Hoefel}
\address{Universidade Federal do Paran\'a, Departamento de Matem\'atica C.P. 019081, 81531-990 Curitiba, PR - Brazil }
\email{hoefel@ufpr.br}
\keywords{}
\subjclass[2000]{}
\date{\today}
\thanks{The author is grateful to Muriel Livernet for valuable conversations leading him to fix lots of inaccuracies. This work is partially supported by 
the MathAmSud project OPECSHA-10MATH01 and Funda\c c\~ao Arauc\'aria, project FA-490/16032}
\begin{abstract}
In this work we present an explicit operad morphism that is also a homotopy equivalence between the operad given by the Fulton MacPherson compactification of configuration spaces and the little $n$-disks operad. In particular, the construction gives an operadic homotopy equivalence between the associahedra and the little intervals explicitly. It can also be extended to the case of Kontsevich compactification and Voronov swiss-cheese operad.
\end{abstract}
\maketitle


The little cubes operad $\cal C_n$ were introduced by Boardman and Vogt \cite{BoaVog73} and extensively used by many authors, including 
Peter May in his famous proof of the Recognition Principle of $n$-fold loop spaces \cite{May72}. On the other hand, the real version of the Fulton MacPherson 
compactification of configuration spaces of points were defined by Axelrod and Singer (see: \cite{AxelSing94} where the manifold with corners structure 
is presented in detail). In the case of the euclidean $n$-space, the Axelrod-Singer compactification results in a operad $\cal F_n$. This operad has also been studied by many authors. Here we will just mention Markl's characterization of $\cal F_n$ as an operadic completion \cite{Markl99c} and Salvatore's proof of 
its cofibrancy \cite{Salvatore01}. It is also well known that $\cal F_1$ gives Stasheff's Associahedra \cite{Stasheff63}. As a consequence of the cofibrancy proven by Salvatore, the operads $\cal F_n$ and $\cal C_n$ are related by the existence of an operad morphism $\nu : \cal F_n \to \cal C_n$ that is also a homotopy equivalence, i.e. an operadic homotopy equivalence. For more details and an extensive historical review, we refer the reader to \cite{MSS02}.   

In this work we construct an operadic homotopy equivalence between $\cal F_n$ and $\cal D_n$ explicitly by using elementary techniques, where $\cal D_n$ is the little disk analogue of the little cubes operad. The constructions also applies to the Swiss-cheese operad and the Kontesevich compactification. The Swiss-cheese operad was originally defined by Voronov in \cite{Voronov99} and a slightly different definition was given by Kontsevich in \cite{kont:defquant-pub}. The difference between the two versions 
of the Swiss-cheese operad from the point of view of algebras over Koszul operads is explored in detail in \cite{LivHoe11pr}, where the second 
version is called the unital swiss-cheese operad. In this paper we will restrict attention to the unital Swiss-cheese operad and show that it is operadically homotopy equivalent to the Kontsevich compactification.  

In Section \ref{disk_swiss_cheese} we review the Little Disks and the (unital) Swiss-cheese operad. The Manifold with Corners structure, given by Axelrod and Singer, on the real version of the Fulton MacPherson Compactification of the configurations spaces is reviewed in Section \ref{coordinates} for the case of points in the Euclidean space. The Kontsevich Compactification is also defined in Section \ref{coordinates}. The construction of the explicit operadic homotopy equivalence is given in Section \ref{main}. 
It is assumed that the reader has some familiarity with the Fulton MacPherson or the Axelrod Singer compactification.


\section{Little disks and Swiss-cheese} \label{disk_swiss_cheese}


\subsection{Little disks}

Let $D^2$ denote the standard unit disk in the complex plane $\bb C$. By a configuration of $n$ disks in $D^2$ we mean a map 
\begin{equation*}
 d : \coprod_{1 \leqslant s \leqslant n} D^2_s \to D^2
\end{equation*}
from the disjoint union of $n$ numbered standard disks $D^2_1, \dots, D^2_n$ to $D^2$
such that $d$, when restricted to each disk, is a composition of translations and dilations. 
The image of each such restriction is called a little disk.
The space of all configurations of $n$ disks is denoted $\cal D_2(n)$ and is topologized 
as a subspace of $(\bb R^2 \times \bb R^+)^n$ containing the coordinates of the center and 
radius of each little disk. The symmetric group acts on $\cal D_2(n)$ by renumbering the disks. 
For $n=0$, we define $\cal D_2(0) = \emptyset$. 
The $\Sigma$-module $\cal D_2 = \{ \cal D_2(n) \}_{n \geqslant 0}$ admits a well known structure 
of operad given by gluing configurations of disks into little disks, see \cite{MSS02}.


\subsection{Swiss-cheese}

For $m,n \geqslant 0$ such that $m + n >0 $, let us define $\cal{SC}(n,m;\op)$ as the space of those 
configurations $d \in \cal D_2(2n + m)$ such that its image in $D^2$ is invariant under complex conjugation 
and exactly $m$ little disks are left fixed by conjugation.
A little disk that is fixed by conjugation must be centered at the real line, in this case
it is called {\it open}. Otherwise, it is called {\it closed}.  
The little disks in $\cal{SC}(n,m;\op)$ are labeled according the following rules:
\begin{enumerate}[i)]
\item Open disks have labels in $\{1, \dots, m \}$ and closed disks 
have labels in $\{ 1, \dots, 2n \}$. 
\item Closed disks in the upper half plane have labels in $\{ 1, \dots, n \}$. 
If conjugation interchanges the images of two closed disks, their labels must be congruent modulo $n$.
\end{enumerate}

There is an action of $S_n \times S_m$ on $\cal{SC}(n,m;\op)$ extending the action of
$S_n \times \{ e \}$ on pairs of closed disks having modulo $n$ congruent labels and the action of
$\{ e \} \times S_m$ on open disks. Figure \ref{swiss_disc} illustrates a point in the space 
$\cal{SC}(n,m;\op)$.

\begin{figure}
  \input{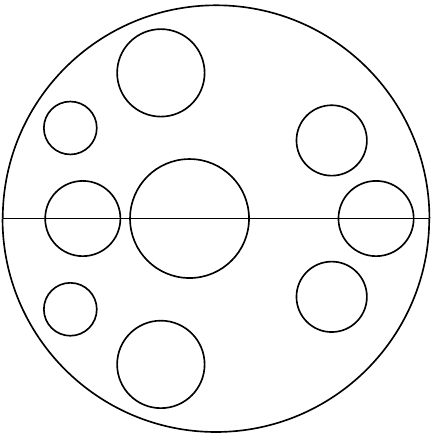_t}
\caption{A configuration in $\cal{SC}(n,m;\op)$}
\label{swiss_disc}
\end{figure}

\begin{defn}[Swiss cheese operad]
 The 2-colored operad $\cal{SC}$ is defined as follows. For $m,n \geqslant 0$ with $m+n > 0$, $\cal{SC}(n,m;\op)$
is the configuration space defined above and $\cal{SC}(0,0;\op) = \emptyset$. For $n \geqslant 0$,  $\cal{SC}(n,0;\cl)$
is defined as $\cal D_2(n)$ and $\cal{SC}(n,m;\cl) = \emptyset$ for $m \geqslant 1$. 
The operad structure in $\cal{SC}$ is given by: 
\begin{align*}
& \circ_i^\cl: \SC (n,m;x)\times \cal{SC}(n',0;\cl)\rightarrow \cal{SC}(n + n'-1,0;x), \ \textrm{for}\ 1\leqslant i\leqslant n \\
& \circ_i^\op: \SC (n,m;x)\times \cal{SC}(n',m';\op)\rightarrow \cal{SC}(n + n',m + m' -1;x), \ \textrm{for}\ 1\leqslant i\leqslant m 
\end{align*}
When $x=\cl$ and $m = 0$, $\circ_i^\cl$ is the usual gluing of little disks in $\cal D_2$. 
If $x=\op$, $\circ_i^\cl$ is defined by gluing each configuration of $\cal{SC}(n',0;\cl)$ in the little disk labeled by $i$ 
and then taking the complex conjugate of the same configuration and gluing the resulting configuration in the little disk 
labeled by $i+n$. Since $\SC (n,m;\cl) = \emptyset$ for $m \geqslant 1$, 
$\circ_i^\op$ is only defined for $x = \op$ and is given by the usual gluing operation of $\cal D_2$. 
\end{defn}


\section{Compactified Configurations Spaces}  \label{coordinates}


Let $p,q$ be non-negative integers satisfying the 
inequality $2p+q \geqslant 2$. We denote by Conf$(p,q)$ the configuration space of marked points
on the upper closed half-plane $H = \{ z \in \mathbb{C} \;|\; {\rm Im}(z) \geqslant 0 \}$ with 
$p$ points in the interior and $q$ points on the boundary (real line): 
\begin{equation*} 
    {\rm Conf}(p,q) = \{ (z_1, \dots, z_p, x_1, \dots, x_q) \in H^{p+q} \;|\;
   z_{i_1} \neq z_{i_2},\, x_{j_1} \neq x_{j_2} \ {\rm Im}(z_i) > 0,\,  {\rm Im}(x_j) = 0  \}  
\end{equation*}

The above configuration space ${\rm Conf}(p,q)$ is the Cartesian product of an open subset of $H^p$ and 
an open subset of $\mathbb{R}^q$ and, consequently, is a $2p + q$ 
dimensional smooth manifold. 
Let $C(p,q)$ be the quotient of ${\rm Conf}(p,q)$ by the action of the 
group of orientation preserving affine transformations that leaves the real line fixed: 
$C(p,q) = {\rm Conf}(p,q)\Big/(z \mapsto az + b)$ where 
$a,b \in \mathbb{R}, \; a > 0$. 
The condition $2p+q \geqslant 2$ ensures that the action 
is free and thus $C(p,q)$ is a $2p + q - 2$ dimensional smooth manifold. In the case of points in the 
complex plane we have: ${\rm Conf}(n) = \{ (z_1, \dots, z_n) \in \C^n \;|\; z_i \neq z_j, \forall i \neq j \}$
and $C(n) = {\rm Conf}(n)\Big/(z \mapsto az + b)$ where 
$a \in \mathbb{R}, \; a > 0 \makebox{ and } b \in \C$. The manifold $C(n)$ is $2n - 3$ dimensional and its 
real Fulton-MacPherson compactification is denoted by $\overline{C(n)}$ (see: \cite{AxelSing94}). 

Let $\phi$ be the embedding
$\phi : C(p,q)  \longrightarrow C(2p + q)$
defined by 
\begin{equation}\label{embedd}
\phi(z_1 , \dots, z_p, x_1, \dots, x_q) = 
(z_1 ,\bar{z}_1, \dots, z_p, \bar{z}_p, x_1, \dots, x_q) 
\end{equation}
where $\bar z$ denotes complex conjugation. 
The Fulton-MacPherson compactification of $C(p,q)$ is defined as the 
closure in $\overline{C(2p + q)}$ of the image of $\phi$ and 
is denoted by $\overline{C(p,q)}$.
For a detailed combinatorial and geometrical study of $\overline{C(p,q)}$, we refer the reader to \cite{Devadoss11}.


Both compactifications $\overline{C(n)}$ and $\overline{C(p,q)}$ have the structure of manifolds 
with corners whose boundary strata are labeled by trees (for details, see: \cite{Hoefel09,Sinha04,kont:defquant-pub,Merkulov10}).
Such labelling by trees defines a 2-colored operad structure denoted by $\cal H_2$. The set 
of colors is $\{ \op,\cl \}$ and 
\begin{equation}
\cal H_2(p,q;x) := \left\{ 
\begin{array}{ll}
 \overline{C(p,q)}, &\  \textrm{if}\  x = \op  \         \textrm{and}\   2p + q \geqslant 2, \\
 \overline{C(p)},   &\  \textrm{if}\  x = \cl, \ q = 0 \ \textrm{and}\   p \geqslant 2,      \\
 \emptyset,         &\  \textrm{if}\  x = \cl  \         \textrm{and}\   q \geqslant 1.
\end{array}\right.
\end{equation}
In addition, we define $\cal H_2(1,0;\cl)$ and $\cal H_2(0,1;\op)$ as the one point space. 

On the other hand, the sequence of manifolds $\{ \overline{C(n)} \}_{n \geqslant 1}$ gives the well known operad $\cal F_2$, where $\overline{C(1)}$ is defined as the one point space.
The manifold $\overline{C(2)}$ is the circle $S^1$ while $\overline{C(3)}$ is the
3-manifold shown in Figure \ref{Jacobi_manifold} which can be described as a solid torus with a braid of two solid torus removed
from its interior. Since the three boundary components of $\overline{C(3)}$ are equivalent to tori, the boundary define three ways of embedding 
$S^1 \times S^1$ into $\overline{C(3)}$ which is part of the operad structure of $\cal F_2$. The manifold $\overline{C(3)}$ which is called the Jacobi manifold because its fundamental class provides a parametrization for the Jacobiator $J$, the homotopy operator for the Jacobi identity in a $L_\infty$-algebra.


\subsection{Coordinates on $\overline{C(n)}$}

Before we proceed, let us review some properties of $\overline{C(n)}$. 
The codimension $k$ boundary component of $\overline{C(n)}$ will be denoted by $\partial_k \overline{C(n)}$, 
it consists of a disjoint union of open submanifolds. More explicitly, we have: 
\begin{equation}
 \partial_k \overline{C(n)} = \bigsqcup_{|T| = k} C(n)(T) 
\end{equation}
where the disjoint union is taken for all labelled trees $T$ and $|T|$ denotes the number of internal edges of $T$.
Each stratum $C(n)(T)$ is open in $\partial_k \overline{C(n)}$ and the strata satisfy the following properties: 
\begin{enumerate}[1)]
 \item If $T$ is a corolla $\delta_k$, then $C(n)(\delta_k)$ is homeomorphic to $C(k)$;
 \item If $T = S_1 \circ_i S_2$, then $C(n)(S_1 \circ_i S_2)$ is homeomorphic to $C(n)(S_1) \times C(n)(S_2)$.
\end{enumerate}

It is also worth mentioning that the closure of each stratum is given by: 
\[ \overline{C(n)(T)} = \bigsqcup_{T' \to T} C(n)(T') \]
where $T' \to T$ means that $T$ can be obtained from $T'$ by contracting a finite number of internal edges.
Hence, the closure of $\partial_k \overline{C(n)}$ is $\bigsqcup_{|T| \geqslant k} C(n)(T)$. 

After modding out by translations and dilations, a configuration $\vec z \in C(p)$ may be seen as 
a sequence of pairwise distinct points $(z_1, \dots, z_p) \in \mathbb C^{\times p}$ that is in normal 
form, i.e., such that: 
$\sum_{i \in [p]} z_i = 0$ and $\sum_{i \in [p]} |z_i|^2 = 1$. In order to show that $\overline{C(n)}$ is 
a manifold with corners, Axelrod and Singer define (\cite{AxelSing94}, formula 5.71), for each tree $S$ with $n$ leaves and $k$ internal edges, a map: 
\begin{equation}
\cal M_S : C(n)(S) \times (\mathbb R_{\geqslant 0})^k \to \mathbb C^n.  
\end{equation}
In \cite{AxelSing94}, the points are in a Riemannian manifold and the coordinate system is defined through the exponential.  
In our case, the manifold is $\mathbb{C}$ and the exponential map is hidden in the affine structure of the complex plane.
The family of maps  $\cal M_S$ are characterized by the following properties: 
\begin{enumerate}[\it i)]
 \item For a corolla $\delta_n$, it is defined as the identity
  $ \cal M_{\delta_n} = {\rm Id} : C(n) \to C(n) \subseteq \mathbb C^n; $
 \item if $\cal M_S$ and $\cal M_T$ are already defined, where $S$ is a tree with $n_1$ leaves and $k$ 
       internal edges and $T$ is a tree with $n_2$ leaves and $l$ internal edges, 
       then $\cal M_{S \circ_i T}$ is defined as follows. First identify 
       $C(n)(S \circ_i T) \times (\mathbb R_{\geqslant 0})^{(k+l-1)} = 
        C(n)(S) \times (\mathbb R_{\geqslant 0})^k \times 
        C(n)(T) \times (\mathbb R_{\geqslant 0})^l \times R_{\geqslant 0}$, and then define:
\[ 
 \xymatrix@1@C=1cm{
  C(n)(S \circ_i T) \times (\mathbb R_{\geqslant 0})^{(k+l-1)} 
  \ar[rr]^-{\cal M_S \times \cal M_T \times {\rm Id}_{\mathbb R}} & &
  C(n_1) \times C(n_2) \times R_{\geqslant 0}
  \ar[r]^-{\gamma_i} & \mathbb C^n}
\]
where $n = n_1 + n_2 - 1$ and $\gamma_i : C(n_1) \times C(n_2) \times R_{\geqslant 0} \to \mathbb C^n$ is given 
by: 
\begin{equation}\label{eq:gamma-i}
\gamma_i (\vec x, \vec y, t) = (x_1, \dots, x_{i-1}, x_i + t(y_1,\dots,y_{n_2}), x_{i+1}, \dots, x_{n_1}), 
\end{equation}
where $\vec x = (x_1, \dots, x_{n_1}) \in C(n_1)$ and $\vec y = (y_1, \dots, y_{n_2}) \in C(n_2);$
 \item the maps $\cal M_S$ are $\Sigma_n$-equivariant in the following sense:
       \[ 
	  \cal M_{(S\sigma)} = (\cal M_S) \sigma, \quad \forall \sigma \in \Sigma_n, 
       \]
where the $\sigma$ action on the left hand side is the right $\Sigma_n$-action on trees, while the action on the right hand side 
the right $\Sigma_n$-action on $\mathbb C^n$. 
\end{enumerate}

\begin{rem}
The reader should compare the above $\gamma_i$ maps with Markl's pseudo-operad structure on Conf$(n)$ \cite{Markl99c}.
\end{rem}


\begin{figure}
 \includegraphics[scale=0.3,angle=90]{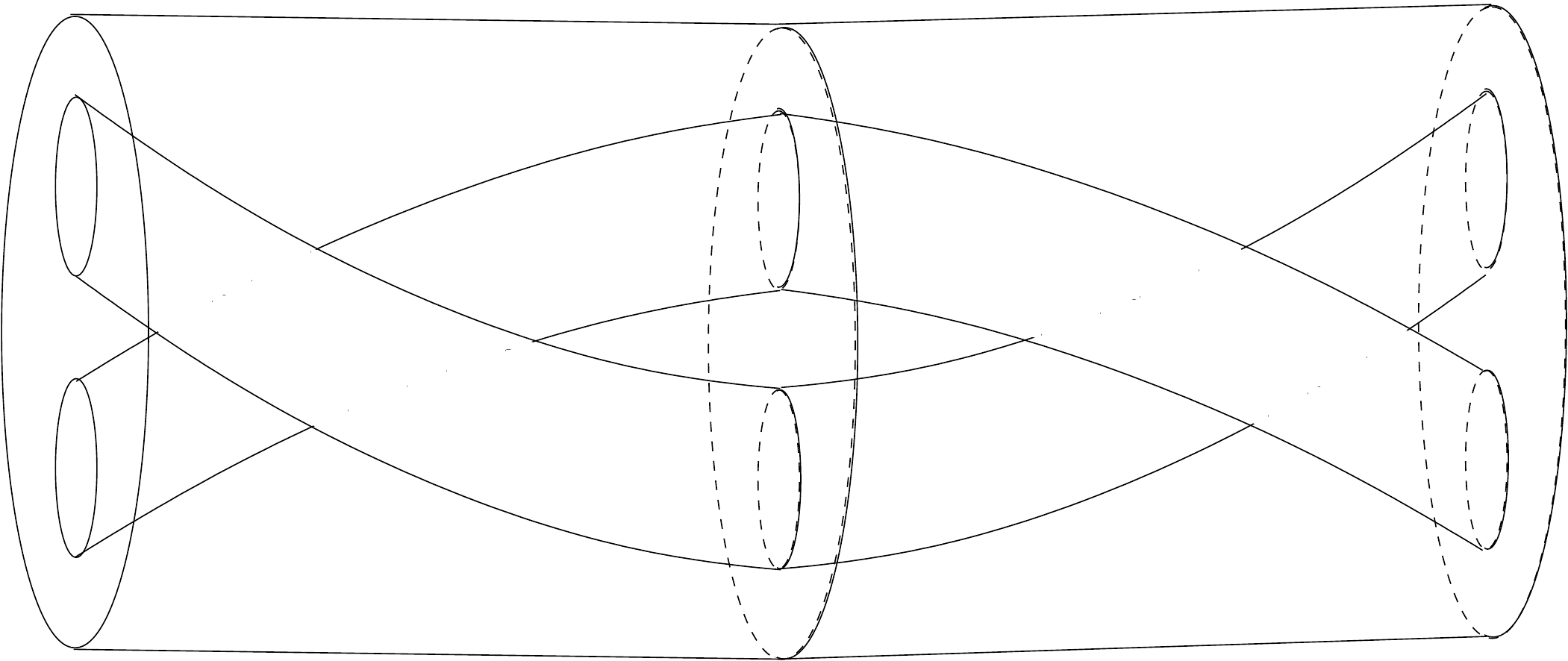}
 \caption{The manifold $\overline{C(3)}$ is obtained from the 3-manifold shown in this picture after identifying the two sides of the cylinder through the identity map.} 
 \label{Jacobi_manifold}
\end{figure}

Modding out by translations and dilations if necessary, we can assume that the local $\cal M_S$ maps 
assume values in $C(n)$. Axelrod and Singer showed that the local $\cal M_S$ maps can be continuously 
extended to maps of the form $\cal M_S : U \times W \to \overline{C(n)}$ and that this set of local $\cal M_S$ maps 
define a coordinate system on $\overline{C(n)}$ giving it a structure of manifold with corners (see also: \cite{Merkulov10}).

\begin{prop}[Axelrod-Singer]
 For any $n$-tree $S$ with $k$ internal edges and any point $p \in C(n)(S)$, there is an open neighborhood $U$ of $p$ 
 in $C(n)(S)$ and an open neighborhood $W$ of $0$ in $(\mathbb{R}_{\geqslant 0})^k$ such that $\cal M_S$ maps 
 $U \times (W \setminus \partial W)$ into ${\rm Conf}(n)$ and is a diffeomorphism onto its image.   
\end{prop}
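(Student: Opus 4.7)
The plan is to proceed by induction on $k = |S|$, the number of internal edges of $S$. In the base case $k = 0$, the tree $S$ is a corolla $\delta_n$, $\mathcal{M}_{\delta_n} = \mathrm{Id}_{C(n)}$ by definition, and $(\mathbb{R}_{\geqslant 0})^0$ is a single point, so the conclusion is immediate (with $W$ itself the point).

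For the inductive step ($k \geqslant 1$), decompose $S = S_1 \circ_i S_2$ with $|S_l| < k$ for $l = 1,2$, and write $n_l$ for the number of leaves of $S_l$. A point $p \in C(n)(S)$ corresponds under property (2) to a pair $(p_1, p_2) \in C(n_1)(S_1) \times C(n_2)(S_2)$. By the inductive hypothesis there are neighborhoods $U_l \times W_l$ on which each $\mathcal{M}_{S_l}$ is a diffeomorphism onto its image in $C(n_l) \subset \mathbb{C}^{n_l}$. Because
\begin{equation*}
\mathcal{M}_S \;=\; \gamma_i \circ (\mathcal{M}_{S_1} \times \mathcal{M}_{S_2} \times \mathrm{Id}_{\mathbb{R}}),
\end{equation*}
the problem reduces to showing that $\gamma_i$ from \eqref{eq:gamma-i} is a diffeomorphism onto its image on a set of the form $V_1 \times V_2 \times (0, \varepsilon)$, where $V_l$ is a neighborhood of $\mathcal{M}_{S_l}(p_l, 0)$, for a sufficiently small $\varepsilon > 0$.

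Two checks are required. First, the image of $\gamma_i$ lies in the distinct-point locus: the bubble points $x_i + t y_l$ are pairwise distinct as soon as $t > 0$ and the $y_l$'s are, while separation between bubble points and outer points $x_m$ (for $m \neq i$) holds for all sufficiently small $t > 0$, since $|x_m - x_i|$ is bounded below on any compact neighborhood of $p_1$ whereas $t \|\vec{y}\|_\infty \to 0$. Second, $\gamma_i$ admits a smooth inverse on the $t > 0$ locus: from an output $\vec z$ one recovers $x_m = z_m$ for $m \neq i$, then $x_i = \tfrac{1}{n_2}\sum_l z_{i+l-1}$ using the normal-form constraint $\sum_l y_l = 0$, then $t = \bigl(\sum_l |z_{i+l-1} - x_i|^2\bigr)^{1/2}$ using $\sum_l |y_l|^2 = 1$, and finally $y_l = (z_{i+l-1} - x_i)/t$. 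These formulas are manifestly smooth wherever $t > 0$, and the dimension count $(2n_1-3)+(2n_2-3)+1 = 2n-3 = \dim C(n)$ confirms that the source and target have matching dimensions.

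The main obstacle is the bookkeeping of the normal-form conventions through the recursion. The inverse above relies crucially on $\vec y$ being in normal form in $C(n_2)$; the composition $\gamma_i\circ(\cdots)$, however, does not automatically deliver an element in normal form on $C(n)$, so one must post-compose with a smooth renormalization by a translation and a dilation. Since these act freely and smoothly on the distinct-point locus, the renormalization is a smooth quotient map that does not spoil the diffeomorphism property, completing the induction.
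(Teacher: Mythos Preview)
The paper does not give its own proof of this proposition; it attributes the result to Axelrod and Singer and merely states it. So there is nothing to compare your argument against in the paper itself, and your inductive strategy---reducing to the explicit inversion of $\gamma_i$ via the normal-form constraints---is indeed the standard way this is proved.

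That said, there is a genuine gap in your inductive step. You write ``decompose $S = S_1 \circ_i S_2$ with $|S_l| < k$'' without constraining \emph{which} internal edge is cut, and then claim that $|x_m - x_i|$ is bounded below on a compact neighborhood of $p_1$. This fails if the cut edge is not incident to the root. Concretely, take the left comb $S = (\delta_2 \circ_1 \delta_2) \circ_1 \delta_2$ and cut the \emph{lower} edge, so $S_1 = \delta_2 \circ_1 \delta_2$ and $S_2 = \delta_2$ with $i=1$. Then $\vec x = \mathcal{M}_{S_1}((\vec a,\vec b),s)$ has $|x_1 - x_2|$ proportional to $s$, which ranges over $(0,\text{size of }W_1)$; hence $|x_1 - x_2|$ is \emph{not} bounded below on the relevant set, and for any fixed $\varepsilon>0$ one can choose $s$ small and $t\in(0,\varepsilon)$ so that $x_1 + t y_l$ collides with $x_2$. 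Your separation argument therefore does not go through for arbitrary decompositions.

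The fix is to always cut at an edge incident to the root of $S$. Then the leaf $i$ of $S_1$ sits at one of the root-level positions $a_j$, while every other $x_m$ lies within a bounded distance (controlled by the parameters in $W_1$) of some other root-level position $a_{j'}$; since the root configuration $\vec a$ ranges over a relatively compact subset of $C(r)$, the quantity $\min_{j'\neq j}|a_{j'}-a_j|$ is genuinely bounded below, and after shrinking $W_1$ one obtains the needed uniform bound on $|x_m - x_i|$. With that adjustment your inverse formula and dimension count complete the argument. A minor related point: your phrase ``$V_l$ is a neighborhood of $\mathcal{M}_{S_l}(p_l,0)$'' is ill-posed when $|S_l|>0$, since $\mathcal{M}_{S_l}(p_l,0)$ is a degenerate configuration lying outside $C(n_l)$; you should instead speak of the image $\mathcal{M}_{S_l}\bigl(U_l\times(W_l\setminus\partial W_l)\bigr)$ directly.
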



\section{Operadic Homotopy Equivalence}  \label{main}

The explicit homotopy equivalence will use the coordinate system defined by the local $\cal M_S$ maps. The basic idea is to define the map from 
$\overline{C(n)} \to \cal D_2(n)$ in the obvious way on the interior of $\overline{C(n)}$ and extend it to the boundary as an operad morphism. The continuity problem can be solved through a collar neighborhood around the boundary.


\subsection{Collar Neighborhood}
Let $U$ be a collar neighborhood of $\partial \overline{C(n)}$ in $\overline{C(n)}$, with an homeomorphism
\begin{equation}
h : \partial \overline{C(n)} \times [0,1) \to U \subseteq \overline{C(n)},
\end{equation}
such that for any $p \in \partial \overline{C(n)}$ there is a neighborhood $W$ of p, 
such that $h(W \times [0,1))$ is a coordinate neighborhood of $p$ in $\overline{C(n)}$.
For any such $p \in \partial \overline{C(n)}$, the subset $h(p \times [0,1))$ is called the fiber 
of $p$ in the collar $U$ and $h(p \times (0,1))$ is the open fiber of $p$ in the collar $U$. 
In view of the description of the coordinate system in $\overline{C(n)}$ given by the local $\cal M_S$ maps, 
all the configurations in a fiber are obtained from the infinitesimal components of $p$ by applying the composition $\gamma_i$ a finite number of times. 

Let us denote by $\pi : \cal D_2(n) \to C(n)$ the canonical projection taking each configuration
of little disks into the configuration of their centers modded out by translations and dilations.

\begin{lem}
 For any $\vec x \in C(n)$, the inverse image $\pi^{-1}(\vec x)$ is convex in $\cal D_2(n)$.
\end{lem}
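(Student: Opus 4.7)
The plan is to unravel what it means for two configurations of little disks to lie in the same fiber of $\pi$, and then check that the defining inequalities of $\cal D_2(n)$ are preserved under taking straight-line convex combinations in $(\bb R^2\times \bb R^+)^n$.

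First, fix a representative $\vec z = (z_1,\dots,z_n) \in {\rm Conf}(n)$ of $\vec x$. Any element of $\pi^{-1}(\vec x)$ is a configuration $d = ((c_1,r_1),\dots,(c_n,r_n))$ for which the centers satisfy $c_i = a z_i + b$ for some $a>0$ and $b \in \bb C$. Given two such configurations $d^{(0)}$ and $d^{(1)}$, with parameters $(a_0,b_0)$ and $(a_1,b_1)$ respectively, I would form the straight-line interpolation $d^{(t)} = t\,d^{(1)} + (1-t)\,d^{(0)}$ for $t \in [0,1]$ and observe that its centers are
\begin{equation*}
c_i^{(t)} = t c_i^{(1)} + (1-t)c_i^{(0)} = \bigl(ta_1 + (1-t)a_0\bigr)z_i + \bigl(tb_1 + (1-t)b_0\bigr),
\end{equation*}
so $c_i^{(t)} = a(t) z_i + b(t)$ with $a(t) > 0$. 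Hence $\pi(d^{(t)}) = \vec x$, which shows the centers stay in the correct equivalence class; the key point here is that the map $(a,b)\mapsto (az_i+b)_i$ is affine, so it commutes with convex combinations.

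Next I would verify that $d^{(t)}$ actually lies in $\cal D_2(n)$, i.e.~that the little disks remain pairwise disjoint and contained in $D^2$. For disjointness, using that $|c_i - c_j| = a\,|z_i - z_j|$ in each fiber,
\begin{equation*}
|c_i^{(t)} - c_j^{(t)}| = a(t)\,|z_i - z_j| = t\,a_1|z_i-z_j| + (1-t)\,a_0|z_i-z_j| \geqslant t(r_i^{(1)}+r_j^{(1)}) + (1-t)(r_i^{(0)}+r_j^{(0)}) = r_i^{(t)} + r_j^{(t)}.
\end{equation*}
For containment in $D^2$, the triangle inequality gives
\begin{equation*}
|c_i^{(t)}| + r_i^{(t)} \leqslant t(|c_i^{(1)}|+r_i^{(1)}) + (1-t)(|c_i^{(0)}|+r_i^{(0)}) \leqslant 1.
\end{equation*}
Positivity of the radii is immediate from $r_i^{(t)} = t r_i^{(1)} + (1-t) r_i^{(0)} > 0$.

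There is essentially no serious obstacle here; the only thing to be careful about is the distinction between convex combinations taken in the parameter space $(a,b,r_1,\dots,r_n)$ and convex combinations taken in the ambient $(\bb R^2 \times \bb R^+)^n$. The argument above shows these agree on a fixed fiber precisely because $c_i$ depends affinely on $(a,b)$, and that is what makes the fiberwise convexity work cleanly.
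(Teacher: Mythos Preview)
Your proof is correct and follows essentially the same approach as the paper: write the two configurations in center--radius coordinates, use that the centers are related by an affine map $z\mapsto az+b$, and check that the disjointness inequalities survive convex combination. Your write-up is in fact more complete than the paper's, since you also verify that the convex combination stays in the fiber $\pi^{-1}(\vec x)$ and that each little disk remains inside $D^2$, two points the paper leaves implicit.
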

\begin{proof}
 It is enough to show that if $d_1$ and $d_2$ are two configurations of little disks in $\cal D_2(n)$ such that the centers
 of $d_1$ and $d_2$ define two configurations of points in $C(n)$ that are the same modulo translation and dilation then
 \begin{equation}
  \delta d_1 + (1 - \delta) d_2
 \end{equation}
 gives a well defined configuration of little disks in $\cal D_2(n)$ for all $\delta \in [0,1]$. Indeed, note that 
 the configurations can be presented in terms of centers and radii as follows: 
 \[ 
  d_1 = ((a_1, \alpha_1), \dots, (a_n, \alpha_n)) \quad \mbox{ and } \quad d_2 = ((b_1, \beta_1), \dots, (b_n, \beta_n)).
 \]
The disjointness between the interiors of two disks is given by: 
\begin{equation}\label{eq:disk-condition}
 \| a_i - a_j \| \geqslant \alpha_i + \alpha_j \quad \mbox{ and } \quad \| b_i - b_j \| \geqslant \beta_i + \beta_j.
\end{equation}
We denote by $\vec a$ and $\vec b$ the configurations of the centers in $d_1$ and $d_2$. Since 
$\vec a = \lambda \vec b + d$ for some $\lambda > 0$ and $d \in \mathbb C$, a straightforward computation 
shows that:
\[ 
\| (\delta a_i + (1 - \delta)b_i) - (\delta a_j + (1 - \delta)b_j) \| \geqslant
(\delta \alpha_i + (1 - \delta)\beta_i) + (\delta \alpha_j + (1 - \delta)\beta_j).
\]
Hence $\delta d_1 + (1 - \delta) d_2$ is a well defined configuration of little disks in $\cal D_2(n)$.
\end{proof}

\begin{cor}\label{well_defined}
For all $p \in \partial \overline{C(n)}$ and $d_1, d_2 \in \pi^{-1}[h(p \times [0,1))]$ any convex combination
\[ \delta d_1 + (1 - \delta) d_2, \qquad \delta \in [0,1] \] 
gives a well defined configuration in $\cal D_2(n)$.
\end{cor}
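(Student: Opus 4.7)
The plan is to extend the previous lemma, which handled the case $\pi(d_1) = \pi(d_2)$, to the situation where $\pi(d_1)$ and $\pi(d_2)$ are possibly distinct points on the common collar fiber $h(p \times [0,1))$. The essential new ingredient is the explicit structure of this fiber coming from the coordinate system $\cal M_S$.

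First I would set up local coordinates at $p$. If $p$ lies in the stratum $C(n)(S)$ for some tree $S$ with $k$ internal edges, then a neighborhood of $p$ in $\overline{C(n)}$ is diffeomorphic to $U \times W \subset C(n)(S) \times (\mathbb R_{\geqslant 0})^k$ via $\cal M_S$, and the collar fiber $h(p \times [0,1))$ corresponds to a smooth curve in $W$ emanating from the origin. Using the recursive description of $\cal M_S$ through the maps $\gamma_i$ from (\ref{eq:gamma-i}), the centers of any configuration whose projection lies on this fiber decompose into a nested cluster structure dictated by $S$: at each internal edge of $S$ one has a scale parameter $t_j \in [0, \epsilon)$ controlling the spread of the corresponding subcluster, and all $t_j$ depend monotonically on the single collar parameter $s$.

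With this in hand, write the centers as $\vec a = (a_1,\dots,a_n)$ for $d_1$ and $\vec b = (b_1,\dots,b_n)$ for $d_2$. For any pair of indices $i \neq j$, look at the lowest internal vertex $v$ of $S$ whose subtree contains both $i$ and $j$. Inside the cluster at $v$, the subconfigurations of $\vec a$ and $\vec b$ are obtained from the \emph{same} element of $C(n(v))$ (the infinitesimal component of $p$ at $v$) by scaling by $t^{(1)}_v$ and $t^{(2)}_v$ respectively. Hence the restriction of $\vec a$ and $\vec b$ to the cluster at $v$ satisfies the relation $\vec a_v = \lambda \vec b_v + d_v$ with $\lambda = t^{(1)}_v / t^{(2)}_v > 0$, so the previous lemma's computation applies verbatim to this pair and shows that
\[
\|(\delta a_i + (1-\delta)b_i) - (\delta a_j + (1-\delta)b_j)\| \geqslant (\delta \alpha_i + (1-\delta)\beta_i) + (\delta \alpha_j + (1-\delta)\beta_j),
\]
i.e.\ condition (\ref{eq:disk-condition}) is preserved for this pair.

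The main obstacle is making the cluster decomposition precise enough that the scaling relation really is of the same translation-dilation type at every vertex of $S$; this requires unraveling the iterated $\gamma_i$ and checking that the cluster at $v$ is genuinely isolated from the rest of the configuration at scale $t_v$, so that the argument is local to $v$. Once this is established, the inequality at $v$ holds independently of the behavior at vertices above $v$, and since every pair of indices is handled by exactly one vertex, the convex combination $\delta d_1 + (1-\delta) d_2$ satisfies (\ref{eq:disk-condition}) for all pairs, giving a well defined configuration in $\cal D_2(n)$.
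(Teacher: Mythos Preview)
Your approach is essentially the paper's: reduce to the previous lemma by arguing that configurations whose centers lie on the same collar fiber are related by the translations and dilations built into the maps $\gamma_i$. The paper compresses this into one sentence (``one is obtained from the other by a sequence of translations and dilations''), while you try to make it precise by passing to the lowest common ancestor $v$ of each pair $(i,j)$.

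There is, however, a real gap at exactly the step you flag as the ``main obstacle''. Take the simplest case $S=\delta_{n_1}\circ_i\delta_{n_2}$, a leaf $j\neq i$ directly below the root, and a leaf $k$ in the grafted subtree; their lowest common ancestor $v$ is the root. Writing the centers of $d_1,d_2$ as affine images of $\gamma_i(\vec x,\vec y,t_1)$ and $\gamma_i(\vec x,\vec y,t_2)$, the relevant displacements are
\[
a_{i,k}-a_j=\lambda_1\bigl(x_i-x_j+t_1\,y_k\bigr),\qquad
b_{i,k}-b_j=\lambda_2\bigl(x_i-x_j+t_2\,y_k\bigr),
\]
and these are \emph{not} parallel once $t_1\neq t_2$ and $y_k$ is not a real multiple of $x_i-x_j$. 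Hence the restriction of $\vec a$ and $\vec b$ to the cluster at the root is not of the form $\vec a_v=\lambda\,\vec b_v+d_v$, and the lemma's inequality cannot be invoked verbatim for this pair. The configuration at $v$ that is genuinely a dilate of the infinitesimal component $(x_1,\dots,x_{n_1})$ is the tuple of cluster \emph{centres}, but the disk condition you need involves the actual leaf positions, which carry the lower-order correction $t\,y_k$. Your proposed ``isolation'' does not remove this correction, so the cross-cluster case remains unhandled; the paper's proof passes over the same point with the phrase ``a sequence of translations and dilations'' and an appeal to the lemma.
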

\begin{proof}
In the previous lemma we have seen that if the centers of little discs are related by translations and dilations, 
then the convex combination of the two configurations of little disks is well defined in $\cal D_2$. 
From the definition of the local $\cal M_S$ maps in the previous section, 
if the centers of $d_1$ and $d_2$ are in the same fiber of the tubular neighborhood, it follows 
that one is obtained from the other by a sequence of translations and dilations.  
The result then follows from the previous lemma.
\end{proof}

\begin{thm}
It is possible to construct an operadic homotopy equivalence $\nu : \cal F_2 \to \cal D_2$.
\end{thm}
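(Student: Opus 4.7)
The strategy is to build $\nu : \cal F_2 \to \cal D_2$ by induction on the arity $n$, combining an obvious interior section of $\pi$ with the operadic composition on each boundary stratum, glued together via a collar neighborhood so that Corollary \ref{well_defined} provides continuity.

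On the open stratum $C(n) \subset \overline{C(n)}$ I would fix a continuous $\Sigma_n$-equivariant section $\mu : C(n) \to \cal D_2(n)$ of $\pi$, e.g.\ by sending a normalized configuration $(z_1,\dots,z_n)$ to the little disks with centers $z_i$ and radii $\tfrac{1}{3}\min_{j \neq i}|z_i - z_j|$. Assuming $\nu$ has been defined on $\overline{C(k)}$ for all $k < n$, each boundary stratum $C(n)(T)$ is identified via properties (1) and (2) with a product $\prod_v C(k_v)$ over the internal vertices $v$ of $T$ (of valence $k_v$), and I define $\nu$ on this stratum as the iterated $\cal D_2$-composition of the previously defined values. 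Operadic associativity in $\cal D_2$ makes this definition consistent with the edge-contraction relation $T' \to T$, so $\nu|_{\partial \overline{C(n)}}$ is a well-defined continuous map.

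To glue the interior and boundary pieces continuously, fix the collar $h : \partial \overline{C(n)} \times [0,1) \to U$ and a cutoff $\phi : [0,1) \to [0,1]$ with $\phi(0) = 0$ and $\phi \equiv 1$ outside a small neighborhood of $0$. In the collar I would set
$$\nu(h(p,t)) = (1-\phi(t))\,\tilde\nu_\partial(p,t) + \phi(t)\,\mu(h(p,t)),$$
where $\tilde\nu_\partial(p,t)$ is the inductively defined boundary value at $p$, rescaled so that its image under $\pi$ runs along the fiber $h(\{p\}\times[0,1))\cap C(n)$ as $t$ varies; outside the collar I take $\nu = \mu$. By Corollary \ref{well_defined} this convex combination is a genuine little-disk configuration, so $\nu$ is a continuous map $\overline{C(n)} \to \cal D_2(n)$.

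The operad morphism property is essentially built in, since the image of each $\circ_i : \overline{C(n_1)} \times \overline{C(n_2)} \to \overline{C(n)}$ lies in $\partial \overline{C(n)}$ where $\nu$ was defined to be the corresponding $\cal D_2$-composition. At each arity, $\nu$ is a map of spaces both weakly equivalent to the ordered configuration space $C(n) \subset \R^2$ (via the inclusion $C(n)\hookrightarrow \overline{C(n)}$ and the projection $\pi$), and is a section of $\pi$ over the interior; a standard Whitehead-type argument gives that $\nu$ is a homotopy equivalence. The principal obstacle in executing this plan is the coherent choice of $\tilde\nu_\partial(p,t)$: one must parametrize the collar fibers of every stratum by a single ``size'' coordinate compatible with the tree-refinement stratification, and one must verify continuity of $\tilde\nu_\partial$ as $p$ crosses into higher-codimension strata, which is the technical heart of the argument.
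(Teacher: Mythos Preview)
Your plan is essentially the paper's own argument: induction on arity, an interior section of $\pi$, the boundary map forced by the operad structure, and a convex-combination glue across a collar justified by Corollary~\ref{well_defined}. Two points deserve correction or simplification.

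First, your section $\mu$ is not quite a map into $\cal D_2(n)$: with radii $\tfrac{1}{3}\min_{j\neq i}|z_i-z_j|$ the little disks are pairwise disjoint, but nothing prevents a disk centred near the unit circle from protruding outside $D^2$ (already for $n=2$ in normal form one gets $|z_i|+r_i>1$). The paper fixes this by taking the radius to involve $\min\{\,|x_i-x_j|,\,1-|x_i|\,\}$; you should include the boundary-distance term as well.

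Second, and more substantively, the ``principal obstacle'' you flag is self-inflicted. You introduce a rescaled $\tilde\nu_\partial(p,t)$ whose $\pi$-image is required to slide along the fibre $h(\{p\}\times[0,1))$, and then worry about choosing this coherently across all strata. The paper avoids this entirely: it extends the boundary map $\nu_\partial$ \emph{constantly} along each collar fibre and then forms $(1-u)\,\nu_1 + u\,\nu_\partial$ for a cutoff $u$. The key observation is that no rescaling is needed for Corollary~\ref{well_defined} to apply, because the operadic composition in $\cal D_2$ already lands in $\pi^{-1}[h(p\times[0,1))]$: composing little-disk configurations produces a configuration whose centres are of the form $\gamma_i(\vec x,\vec y,t)$ of equation~\eqref{eq:gamma-i} for some positive $t$ determined by the disk radii, which is exactly a point of the collar fibre through $p$. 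With this simplification the coherence problem across higher-codimension strata disappears, and continuity follows directly from continuity of $\nu_\partial$ on $\partial\overline{C(n)}$ together with the cutoff.
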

\begin{proof}
The open submanifold $\overline{C(n)} \setminus \partial \overline{C(n)}$ of $\overline{C(n)}$ is 
homotopy equivalent to $C(n)$  which in turn is homeomorphic to 
the configuration space of $n$ points in the plane modded out by translations and dilations.
After modding out by translations and dilations, the configurations $(x_i)_{i \in [n]}$ can be 
thought of as configurations in normal form, i.e., such that 
$\sum_{i \in [n]} x_i = 0$ and $\sum_{i \in [n]} |x_i|^2 = 1$. By assigning to each point $x_i$ 
a disk centered at it with radius $r = {\rm min}\{ | x_i - x_j|, 1 - | x_i | \}_{1 \leqslant i < j \leqslant n}$, 
we get a continuous map 
$\nu(n)_1 : C(n) \to \mathcal{D}(n)$ which is clearly a homotopy equivalence.

Note that $\overline{C(2)}$ is just the circle $S^1$, hence a manifold without boundary. So 
$\overline{C(2)} = C(2)$, and in this case the map is already defined. 
Now, assuming that those maps are already extended 
for all $\overline{C(k)}$ with $k < n$, let us show how to extend them to $\overline{C(n)}$.
Since the boundary of $\overline{C(n)}$ has only strata that are products of $\overline{C(k)}$ with $k < n$, 
we define $\nu(n)_2 : \partial \overline{C(n)} \to \cal D_2(n)$ as an operad morphism. 

Now take a collar neighborhood $U$ around the boundary in $\overline{C(n)}$ given by the coordinate system of the previous section
and extend $\nu(n)_2$ to the collar neighborhood so that it is constant along each fiber.
Since $\overline{C(n)}$ is compact, there is a continuous function $u : \overline{C(n)} \to [0,1]$ that is $1$ on $\partial \overline{C(n)}$ and 
vanishes outside the collar neighborhood. We define \[ \nu(n) = (1 - u)\nu(n)_1 + u \nu(n)_2. \] For each $p$ in the collar $U$, we have that 
$\nu(n)_1(p)$ and $\nu(n)_2(p)$ belong to $\pi^{-1}[h(p \times [0,1))]$, hence the map $\nu(n)$ is well defined by Corollary \ref{well_defined}.
Since each $\nu(n)$ was defined as an operad morphism on the boundary and as a homotopy equivalence on the interior, we have an operad morphism
$\nu : \cal F_2 \to \cal D_2$ that is a homotopy equivalence for each $n$, because $\overline{C(n)}$
is homotopy equivalent to its interior $C(n)$. 
\end{proof}

\begin{rem}
With the same argument, one can construct an explicit operadic homotopy equivalence $\nu : \cal F_k \to \cal D_k$. When $k=1$, it is well known that $\cal F_1$ is the operad known as Stasheff's Associahedra, which is operadically homotopy equivalent to $\cal D_1$.
\end{rem}

\begin{cor}
There is a morphism of 2-colored operads 
$\mu : \cal H_2 \to \SC$ such that $\mu(p,q;x)$ 
is a homotopy equivalence for any $p+q \geqslant 1$ and $x = \cl,\op$.
\end{cor}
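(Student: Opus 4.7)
The plan is to mimic the inductive construction of $\nu$ in the 2-colored setting. For the closed color, simply set $\mu(n,0;\cl) := \nu(n)$, which is already a homotopy equivalence; the requirement that $\mu$ be an operad morphism forces this choice since $\cal H_2(n,0;\cl)=\overline{C(n)}$ and $\SC(n,0;\cl)=\cal D_2(n)$. For the open color, construct $\mu(p,q;\op):\overline{C(p,q)}\to\SC(p,q;\op)$ by induction on $2p+q$, starting from the one-point base case $\cal H_2(0,1;\op)$.

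At each inductive step, first define an interior map $\mu(p,q;\op)_1:C(p,q)\to\SC(p,q;\op)$ by the same construction used for $\nu(2p+q)_1$: view a configuration in $C(p,q)$ as its image in $C(2p+q)$ under the conjugation-symmetric embedding $\phi$, put it in normal form, and assign each marked point a little disk by the same radius rule. Since that rule depends only on pairwise distances and distances to the unit circle, it is conjugation-invariant, so the resulting configuration of little disks is symmetric under complex conjugation and the $q$ boundary points give disks centered on the real line; hence the image lies in $\SC(p,q;\op)$. Each boundary stratum of $\overline{C(p,q)}$ is a product of $\cal H_2$-spaces of strictly smaller arity, so by the induction hypothesis combined with $\mu(\cdot,0;\cl)=\nu$, a boundary map $\mu(p,q;\op)_2:\partial\overline{C(p,q)}\to\SC(p,q;\op)$ is determined by the requirement that $\mu$ be a morphism of 2-colored operads. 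Extend it to a collar neighborhood $U$ of the boundary via the local $\cal M_S$ coordinates, making it constant along each fiber, pick a continuous cutoff $u$ equal to $1$ on the boundary and vanishing outside $U$, and set
\begin{equation*}
\mu(p,q;\op) \;=\; (1-u)\,\mu(p,q;\op)_1 \;+\; u\,\mu(p,q;\op)_2.
\end{equation*}

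Well-definedness of the convex combination reduces to a Swiss-cheese analog of Corollary \ref{well_defined}: if two configurations in $\SC(p,q;\op)$ project to the same fiber of the collar, then, viewed in $\cal D_2(2p+q)$ via $\phi$, their centers are related by a translation and dilation preserving the real line, so the convexity computation in the lemma produces a valid element of $\cal D_2(2p+q)$, which is manifestly still conjugation-symmetric and therefore lies in $\SC(p,q;\op)$. The map $\mu(p,q;\op)$ is a homotopy equivalence because $\overline{C(p,q)}$ deformation retracts onto its interior $C(p,q)$ and the interior piece $\mu(p,q;\op)_1$ is visibly a homotopy equivalence onto the open subspace of $\SC(p,q;\op)$ consisting of configurations with disjoint disk interiors of maximal radius. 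The main obstacle is compatibility with the 2-colored operad structure: one must make the inductive choices of collar neighborhoods and cutoff functions coherent across both colors and both kinds of gluing $\circ_i^\cl$ and $\circ_i^\op$, so that the boundary extensions $\mu(p,q;\op)_2$ agree with those already used for $\nu$ on closed strata. This coherence can be arranged in exact parallel with the implicit choice made in the proof of the theorem, since the product structure of the boundary strata of $\overline{C(p,q)}$ matches the operadic composition rules of $\SC$.
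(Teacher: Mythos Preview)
Your argument is correct, but it takes a different route from the paper. The paper's proof is a one-liner: since $\overline{C(p,q)}$ sits inside $\overline{C(2p+q)}$ via the conjugation-doubling embedding $\phi$ of \eqref{embedd}, and $\SC(p,q;\op)$ sits inside $\cal D_2(2p+q)$ by exactly the same recipe, the already-constructed map $\nu(2p+q):\overline{C(2p+q)}\to\cal D_2(2p+q)$ simply \emph{restricts} to the desired $\mu(p,q;\op)$. No new induction is run; the 2-colored morphism is obtained by cutting the one-colored morphism down to the conjugation-invariant locus.

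Your approach instead rebuilds the map from scratch on $\overline{C(p,q)}$ by rerunning the collar-and-cutoff induction in the 2-colored setting. This is more laborious but has the virtue of being self-contained: you never need to check that the choices of collar neighborhoods and cutoff functions for $\nu$ in the theorem can be made conjugation-equivariantly (which is what the paper's restriction argument tacitly assumes, and which can indeed be arranged but is not stated). Conversely, the paper's approach buys brevity and makes the compatibility between $\mu(\cdot,0;\cl)=\nu$ and $\mu(\cdot,\cdot;\op)$ automatic, whereas in your version that compatibility is the ``main obstacle'' you flag at the end. Both arguments ultimately rest on the same convexity lemma and the same observation that the radius rule is conjugation-invariant; they differ only in whether one restricts a global construction or reassembles it locally.
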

\begin{proof}
The manifold $\overline{C(p,q)}$ is embedded in $\overline{C(2p+q)}$ in the same way
that $\SC(p,q;o)$ is embedded in $\mathcal{D}_2(2p+q)$. Hence the operadic homotopy equivalence 
$\overline{C(2p+q)} \to \mathcal{D}_2(2p+q)$ naturally restricts to a homotopy equivalence between 
$\overline{C(p,q)}$ and $SC(p,q;o)$. 
\end{proof}

\bibliographystyle{amsplain}
\bibliography{NCG_BA}

\def\cprime{$'$} \def\cprime{$'$} \def\cprime{$'$} \def\cprime{$'$}
\providecommand{\bysame}{\leavevmode\hbox to3em{\hrulefill}\thinspace}
\providecommand{\MR}{\relax\ifhmode\unskip\space\fi MR }
\providecommand{\MRhref}[2]{%
  \href{http://www.ams.org/mathscinet-getitem?mr=#1}{#2}
}
\providecommand{\href}[2]{#2}
\begin{thebibliography}{10}

\bibitem{AxelSing94}
Scott Axelrod and Isadore~M. Singer, \emph{{Chern-Simons perturbation theory.
  II.}}, J. Differ. Geom. \textbf{39} (1994), no.~1, 173--213.

\bibitem{BoaVog73}
J.~M. Boardman and R.~M. Vogt, \emph{Homotopy invariant algebraic structures on
  topological spaces}, Springer-Verlag, Berlin, 1973, Lecture Notes in
  Mathematics, Vol. 347.

\bibitem{Devadoss11}
Satyan~L. Devadoss, Benjamin Fehrman, Timothy Heath, and Aditi Vashist,
  \emph{{Moduli spaces of punctured Poincar\'e disks}}, preprint
  arXiv:1109.2830v1, 2011.

\bibitem{Hoefel09}
Eduardo Hoefel, \emph{{OCHA and the swiss-cheese operad.}}, J. Hom. Relat.
  Struct. \textbf{4} (2009), no.~1, 123--151.

\bibitem{kont:defquant-pub}
Maxim Kontsevich, \emph{Deformation quantization of {P}oisson manifolds}, Lett.
  Math. Phys. \textbf{66} (2003), no.~3, 157--216.

\bibitem{LivHoe11pr}
Muriel Livernet and Eduardo Hoefel, \emph{{OCHA} and {L}eibniz {P}airs,
  {T}owards a {K}oszul {D}uality}, arXiv:1104.3607, 2011.

\bibitem{Markl99c}
Martin Markl, \emph{A {C}ompactification of the {R}eal {C}onfiguration {S}pace
  as an {O}peradic {C}ompletion}, Journal of Algebra \textbf{215} (1999),
  no.~1, 185--204.

\bibitem{MSS02}
Martin Markl, Steve Shnider, and Jim Stasheff, \emph{Operads in algebra,
  topology and physics}, Mathematical Surveys and Monographs, vol.~96, American
  Mathematical Society, Providence, RI, 2002.

\bibitem{May72}
J.~P. May, \emph{The geometry of iterated loop spaces}, Springer-Verlag,
  Berlin, 1972, Lectures Notes in Mathematics, Vol. 271.

\bibitem{Merkulov10}
S.~A. Merkulov, \emph{Operads, configuration spaces and quantization},
  arXiv:1005.3381, 2010.

\bibitem{Salvatore01}
Paolo Salvatore, \emph{Configuration spaces with summable labels},
  Cohomological methods in homotopy theory (Bellaterra, 1998), Progr. Math.,
  vol. 196, Birkh\"auser, Basel, 2001, pp.~375--395.

\bibitem{Sinha04}
Dev~P. Sinha, \emph{{Manifold-theoretic compactifications of configuration
  spaces.}}, Sel. Math., New Ser. \textbf{10} (2004), no.~3, 391--428.

\bibitem{Stasheff63}
James~Dillon Stasheff, \emph{Homotopy associativity of {$H$}-spaces. {I},
  {II}}, Trans. Amer. Math. Soc. 108 (1963), 275-292; ibid. \textbf{108}
  (1963), 293--312.

\bibitem{Voronov99}
Alexander~A. Voronov, \emph{The {S}wiss-cheese operad}, Homotopy invariant
  algebraic structures ({B}altimore, {MD}, 1998), Contemp. Math., vol. 239,
  Amer. Math. Soc., Providence, RI, 1999, pp.~365--373.

\end{thebibliography}
\bigskip

\end{document}